\def\bm{}
\def\iii{\vert\kern-0.25ex\vert\kern-0.25ex\vert}
\def\B{\mathcal{B}}
\def\R{\mathbb{R}}
\def\k{\kappa}
\def\vel{\bm\beta}
\def\dotdot{\cdot,\,\cdot}
\def\llb{\llbracket}
\def\rrb{\rrbracket}
\newtheorem{thm}{Theorem}
\newtheorem{prop}{Proposition}
\begin{document}

\title[Accuracy of DG with reentrant faces]{A short note on the accuracy of the discontinuous Galerkin method with reentrant faces}
\author[Pazner and Haut]{Will Pazner and Terry Haut}
\address{Center for Applied Scientific Computing, Lawrence Livermore National Laboratory}
\begin{abstract}
We study the convergence of the discontinuous Galerkin (DG) method applied to the advection--reaction equation on meshes with reentrant faces.
On such meshes, the upwind numerical flux is not smooth, and so the numerical integration of the resulting face terms can only be expected to be first-order accurate.
Despite this inexact integration, we prove that the DG method converges with order $\mathcal{O}(h^{p+1/2})$, which is the same rate as in the case of exact integration.
Consequently, specialized quadrature rules that accurately integrate the non-smooth numerical fluxes are not required for high-order accuracy.
These results are numerically corroborated on examples of linear advection and discrete ordinates transport equations.
\end{abstract}
\maketitle

\section{Introduction}

The discontinuous Galerkin (DG) method is a high-order finite element method that is well-suited for advection-dominated problems on unstructured meshes.
On general quasi-uniform meshes, the DG discretization of the advection--reaction equation was shown to have $L^2$ error of order $\mathcal{O}(h^{p+1/2})$, where $h$ is the mesh size, and $p$ is the degree of polynomial approximation \cite{Brezzi2004,Johnson1986}.
Although this estimate is suboptimal, it was demonstrated that on so-called \textit{Peterson meshes} that it is sharp \cite{Peterson1991}.
In practice, it is often observed that the DG method converges with optimal $\mathcal{O}(h^{p+1})$ order of accuracy.
Indeed, on several special classes of meshes, the DG method was proven to have optimal $L^2$ order of accuracy for the advection--reaction equation \cite{Richter1988,Cockburn2008}.

The proofs of accuracy of the DG method typically assume exact integration of the volumetric and surface integrals that appear in the formulation.
This is usually not an obstacle, since the integrands are polynomial functions for which well-known quadrature rules may be used.
Even in the case where inexact quadrature rules are used (for example, in the case of the discontinuous Galerkin spectral element method), the quadrature error itself is high-order, and does not pose a barrier to the high-order convergence of the method \cite{Canuto2007}.
The impact of inexact quadrature rules on the accuracy of DG discretizations for hyperbolic conservation laws was also considered in \cite{Huang2016}.
However, if the standard upwind numerical flux is used, and if the mesh contains so-called \textit{reentrant faces} (that is, faces for which the sign of $\vel\cdot\bm n$ changes, where $\vel$ is the velocity field, and $\bm n$ is the normal vector), then the resulting integrand possesses only $C^0$ continuity, and high-order convergence of the integral using quadrature rules can no longer be expected.
For a fixed Gaussian quadrature rule, the surface integral of the numerical flux can only be expected to converge to the true integral with order $\mathcal{O}(h)$, independent of degree of polynomial approximation $p$.
Reentrant faces occur naturally when the velocity field is spatially varying, or when the mesh contains curved elements \cite{Wareing2001}.
Such meshes are important both in order to accurate resolve complex geometries, and in the context of high-order Lagrangian hydrodynamics \cite{Dobrev2012}.
There are numerous important applications of DG discretizations on meshes with reentrant faces, including thermal radiative transfer, describing matter-radiation interaction on high-order meshes obtained from Lagrangian hydrodynamics \cite{Haut2019}, $S_N$ transport on curved meshes \cite{Warsa2002,Wareing2001}, monotonicity-preserving advection-based remap \cite{Anderson2014}, incompressible fluid flow in vorticity--streamfunction formulation \cite{Liu2000}, among others.
Each of these problems involves the DG discretization of advection-type equations on meshes with reentrant faces; for such applications, it is important to maintain high-order accuracy while using the upwind flux that enables efficient sweeping algorithms.
In practice, the predicted order of accuracy of the method is still typically observed, despite the quadrature error.
In this paper, we prove that the standard $\mathcal{O}(h^{p+1/2})$ error estimates still hold in the case of inexact (low-order) integration of the upwind numerical fluxes.
Consequently, we show that specialized quadrature rules that accurately integrate the non-smooth upwind numerical fluxes are not required to attain high-order accuracy for these problems.
Numerical examples of the linear advection--reaction and discrete ordinates radiative transfer equations are used to corroborate the analytical results.

\section{Model problem and discretization}
\label{sec:model-problem}

Let $u \in H^{p+1}(\Omega)$ be the exact solution to the steady linear advection--reaction equation
\begin{equation} \label{eq:adv}
   \nabla\cdot(\vel u) + cu = f,
\end{equation}
in spatial domain $\Omega \subseteq \R^d$, where $\vel : \R^d \to \R^d$ is a prescribed velocity field.
We make the standard assumption (cf.\ \cite{Friedrichs1958,Ern2006}) that there is a positive constant $c_0$ such that
\[
   c(\bm x) + \frac{1}{2} \nabla \cdot \vel(\bm x) \geq c_0 > 0.
\]

Given a mesh $\mathcal{T}_h$ of $\Omega$, we define the DG bilinear form $\B(\dotdot)$ by
\begin{align}
   \label{eq:dg-form}
   \B(u_h, v_h) &= a(u_h, v_h) + b(u_h, v_h) + s(u_h, v_h), \\
   \label{eq:a}
   a(u_h, v_h) &= -\int_\Omega u_h \vel \cdot \nabla_h v_h \, d\bm x + \int_\Omega c u_h v_h \, d\bm x, \\
   \label{eq:b}
   b(u_h, v_h) &= \int_\Gamma \vel \{ u_h \} \cdot \llb v_h \rrb \, ds, \\
   \label{eq:s}
   s(u_h, v_h) &= \int_\Gamma b_0 \llb u_h \rrb \cdot \llb v_h \rrb \, ds.
\end{align}
In the above, $\nabla_h$ denotes the \textit{broken gradient}, evaluated element-by-element, $\Gamma$ denotes the the union of boundaries of elements in $\mathcal{T}_h$, and $b_0$ is a \textit{stabilization function} that must be chosen appropriately.
The form $s(\dotdot)$ is required in order to obtain a stable scheme.
The function $b_0$ is chosen to satisfy
\begin{equation} \label{eq:b0-theta0}
   b_0 \geq \theta_0 | \vel \cdot \bm n | \qquad\text{for some $\theta_0 > 0$.}
\end{equation}
Note that if the stabilization function is chosen as $b_0 = \frac{1}{2} | \vel\cdot\bm n |$, then \eqref{eq:dg-form} is equivalent to the standard \textit{upwind} bilinear form,
\[
   \B_{\textit{upw}}(u_h, v_h)
   = -\int_\Omega u_h \vel \cdot \nabla_h v_h \, d\bm x
   + \int_\Omega c u_h v_h \, d\bm x
   + \int_\Gamma \widehat{u_h} \vel \cdot \llb v_h \rrb \, ds,
\]
where the term $\widehat{u_h}$ denotes the \textit{upwind numerical flux}, defined on the interface between neighboring elements $\k^-$ and $\k^+$ by
\[
   \widehat{u_h} = \begin{cases}
      u_h^- \quad& \text{if $\vel \cdot \bm n^- \geq 0$}, \\
      u_h^+ \quad& \text{otherwise},
   \end{cases}
\]
where $n^-$ points outward from $\k^-$.
Likewise, if $b_0 = 0$ (and so $s(\dotdot) = 0$), then \eqref{eq:dg-form} is equivalent to using the mean value flux, for which the DG scheme is stable only in the $L^2$ norm (and hence can result in highly oscillatory solutions).

In practice, the integrals in the forms \eqref{eq:a}--\eqref{eq:s} are typically computed or approximated using numerical quadrature.
We assume that the quadrature used is sufficiently accurate to integrate the integrals in \eqref{eq:a} and \eqref{eq:b} exactly.
The integrands in \eqref{eq:a} and \eqref{eq:b} are piecewise polynomials whose degree depends on the DG finite element space, and the degree of polynomial approximation used for the mesh geometry, and so Gaussian quadrature rules can efficiently compute these integrals.
If $\vel\cdot\bm n$ does not change sign, then \eqref{eq:s} will also be integrated exactly with the same quadrature.
As a consequence, if the mesh contains no reentrant faces, then the quadrature is exact for all terms in \eqref{eq:dg-form}.
However, on a reentrant face, the sign of $\vel\cdot\bm n$ changes, and so the stabilization function $b_0$ is not smooth.
In this case, generally speaking, even for a Gaussian quadrature rule with high degree of precision, the integrals will not be computed exactly.
Moreover, because $b_0$ is not smooth, the quantities computed with numerical quadrature will not even be \textit{high-order accurate}.
We can only expect the quadrature approximation of the stabilization term to be first-order accurate on a reentrant face, regardless of the degree of polynomial approximation or quadrature.

In what follows, we will prove stability and high-order accuracy of the DG method with inexact integration of the upwind term, \textit{not relying on high-order accuracy of the quadrature approximation}.

The following arguments are closely related to those of Brezzi et al.~\cite{Brezzi2004}, however we work with a discrete DG norm that is defined in terms of a quadrature rule.
On a given edge $e \in \Gamma$, we write the quadrature approximation as
\[
   \int_e \bm\varphi(\bm x) \cdot \bm\psi(\bm x) \, ds \approx I_e [ \bm\varphi, \bm\psi ] := \sum_{i=1}^{n_q} w_{i,e} \bm\varphi(\bm x_{i,e}) \cdot \bm\psi(\bm x_{i,e}).
\]
It is clear that $I_e[\dotdot]$ defines a semi-definite form that induces a seminorm
\[
   | \bm\varphi |_{e}^2 := I_e(\bm \varphi, \bm \varphi).
\]
We also define $I_e(\dotdot)$ and $| \cdot |_e$ similarly for scalar arguments.
Let the form $\hat{s}(\dotdot)$ be defined as the approximation to $\eqref{eq:s}$ with quadrature,
\[
   \hat{s}(u_h, v_h) = \sum_{e\in\Gamma} I_e[b_0 \llb u_h \rrb, \llb v_h \rrb],
\]
and let $\widehat{\B}(\dotdot)$ denote the bilinear form with inexact integration, defined by
\[
   \widehat{\B}(u_h, v_h) = a(u_h, v_h) + b(u_h, v_h) + \hat{s}(u_h, v_h).
\]
We remark again that quadrature rule is taken to be sufficiently accurate that the forms $a(\dotdot)$ and $b(\dotdot)$ can be integrated exactly, and so there is no need to introduce their corresponding approximations.
We define the DG norm with quadrature $\iii v \iii$ by
\[
   \iii v \iii^2 = \| v \|_0^2 + \sum_{e\in\Gamma} | b_0^{1/2} \llb v \rrb |_e^2.
\]
This is the natural norm in which we can show stability of the form $\widehat{\B}(\dotdot)$.

\begin{prop}
   The bilinear form $\widehat{\B}(\dotdot)$ is stable with respect to the norm $\iii\cdot\iii$,
   \[
      \widehat{\B}(v_h, v_h) \gtrsim \iii v_h \iii, \qquad \text{for all $v_h \in V_h$}.
   \]
\end{prop}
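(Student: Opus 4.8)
The plan is to prove coercivity, i.e. $\widehat{\B}(v_h,v_h)\gtrsim\iii v_h\iii^2$, and the decisive observation is that the inexact quadrature enters $\widehat{\B}$ only through the stabilization term $\hat{s}$, which by the very definition of the discrete norm satisfies $\hat{s}(v_h,v_h)=\sum_{e\in\Gamma}|b_0^{1/2}\llb v_h\rrb|_e^2$. Thus the quadrature contribution to $\widehat{\B}(v_h,v_h)$ is exactly the interface seminorm appearing in $\iii\cdot\iii$, and it remains only to show that the exactly integrated part $a(v_h,v_h)+b(v_h,v_h)$ controls $\|v_h\|_0^2$ from below. This is precisely why low-order integration of the upwind flux does no harm to stability: we never need the quadrature to be accurate, only nonnegative.

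First I would treat the advective part. Writing $v_h\,\vel\cdot\nabla_h v_h=\tfrac12\vel\cdot\nabla_h(v_h^2)$ and integrating by parts element-by-element, I obtain $-\int_\Omega v_h\,\vel\cdot\nabla_h v_h\,d\bm x=\tfrac12\int_\Omega(\nabla\cdot\vel)v_h^2\,d\bm x-\tfrac12\sum_{\k}\int_{\partial\k}(\vel\cdot\bm n)v_h^2\,ds$. The element-boundary sum is then reassembled over faces: on an interior face shared by $\k^-$ and $\k^+$ (with $\bm n^+=-\bm n^-$) it equals $\tfrac12\int_e(\vel\cdot\bm n^-)\big[(v_h^-)^2-(v_h^+)^2\big]\,ds$.

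Next I would compute $b(v_h,v_h)$ on the same face. Using $\{v_h\}=\tfrac12(v_h^-+v_h^+)$ and $\llb v_h\rrb=(v_h^- -v_h^+)\bm n^-$, a direct expansion gives $\vel\{v_h\}\cdot\llb v_h\rrb=\tfrac12(\vel\cdot\bm n^-)\big[(v_h^-)^2-(v_h^+)^2\big]$, which is exactly the negative of the interface contribution found above. Hence the interior-face terms cancel, leaving $a(v_h,v_h)+b(v_h,v_h)=\int_\Omega\big(c+\tfrac12\nabla\cdot\vel\big)v_h^2\,d\bm x$ together with boundary contributions on $\partial\Omega$ that are nonnegative on the outflow boundary and treated by the prescribed inflow condition. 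The coercivity hypothesis $c+\tfrac12\nabla\cdot\vel\geq c_0>0$ then yields $a(v_h,v_h)+b(v_h,v_h)\geq c_0\|v_h\|_0^2$.

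Combining the two pieces, $\widehat{\B}(v_h,v_h)\geq c_0\|v_h\|_0^2+\sum_{e\in\Gamma}|b_0^{1/2}\llb v_h\rrb|_e^2\geq\min(c_0,1)\,\iii v_h\iii^2$, which is the claim. I do not anticipate a genuine obstacle here: the only points requiring care are the standard algebraic identity that makes the interface terms of $a$ and $b$ cancel, and the bookkeeping of the $\partial\Omega$ boundary terms. The usual difficulty with inexact quadrature is entirely sidestepped because the discrete norm is defined through the same quadrature rule, so $\hat{s}(v_h,v_h)$ reproduces the stabilization seminorm exactly rather than approximately; positivity of the weights $w_{i,e}$ and of $b_0$ is all that is used.
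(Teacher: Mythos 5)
Your proof is correct and follows essentially the same route as the paper's: the interface contributions of $a$ and $b$ cancel via the identity $\vel\{v_h\}\cdot\llb v_h\rrb = \tfrac12\vel\cdot\llb v_h^2\rrb$, leaving $\int_\Omega(c+\tfrac12\nabla\cdot\vel)v_h^2\,d\bm x \geq c_0\|v_h\|_0^2$, while $\hat{s}(v_h,v_h)$ equals the quadrature-defined jump seminorm exactly by construction. The only difference is that you derive the integration-by-parts identity explicitly where the paper cites it from Brezzi et al.
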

\begin{proof}
   From \cite{Brezzi2004}, we have
   \[
      a(v_h, v_h)
      = \int_\Omega \left( \frac{1}{2} \nabla \cdot \vel + c \right) v_h^2 \,d\bm x
      - \frac{1}{2} \int_\Gamma \vel \cdot \llb v_h^2 \rrb \, ds.
   \]
   Similarly, using the identity $\vel \{ v_h \} \cdot \llb v_h \rrb = \frac{1}{2} \vel \cdot \llb v_h^2 \rrb$ we see
   \[
      b(v_h, v_h) = \frac{1}{2} \int_\Gamma \vel \cdot \llb v_h^2 \rrb \, ds.
   \]
   Therefore,
   \[
      a(v_h, v_h) + b(v_h, v_h)
      = \int_\Omega \left( \frac{1}{2} \nabla \cdot \vel + c \right) v_h^2 \,d\bm x
      \geq c_0 \| v_h \|_0^2.
   \]
   It is easy to see that
   \[
      \hat{s}(v_h, v_h)
      = \sum_{e\in\Gamma} I_e[b_0^{1/2} \llb v_h \rrb, b_0^{1/2} \llb v_h \rrb ]
      = \sum_{e\in\Gamma} | b_0^{1/2} \llb v_h \rrb |_e^2.
   \]
   Consequently, $\widehat{B}(v_h, v_h) \gtrsim \iii v_h \iii^2$.
\end{proof}

\begin{thm} \label{thm:conv}
   Let $u$ denote the exact solution to \eqref{eq:adv}, and let $u_h \in V_h$ satisfy $\widehat{B}(u_h, v_h) = \ell(v_h)$ for all $v_h \in V_h$, where the linear form is defined by $\ell(v_h) = \int_\Omega f v_h \, dx$.
   Then,
   \[
      \iii u - u_h \iii \lesssim h^{p+1/2} \| u \|_{p+1}.
   \]
\end{thm}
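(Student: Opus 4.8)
The plan is to run the classical energy argument for DG applied to the advection--reaction equation, isolating the single place where the inexact quadrature could enter. First I would introduce the $L^2$-orthogonal projection $\Pi$ onto $V_h$ and split the error as $u - u_h = \eta + \xi$ with $\eta = u - \Pi u$ and $\xi = \Pi u - u_h \in V_h$; the approximation part $\iii\eta\iii$ is controlled directly by interpolation and trace estimates, so the real work is to bound $\iii\xi\iii$. For this I would record a Galerkin orthogonality. Since the exact solution is continuous, $\llb u \rrb = 0$ both pointwise and at every quadrature node, hence $\hat{s}(u,v_h)=0$; and integrating \eqref{eq:adv} by parts element-by-element gives $a(u,v_h)+b(u,v_h)=\ell(v_h)$. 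Therefore $\widehat{\B}(u-u_h,v_h)=0$ for all $v_h\in V_h$, \emph{with no quadrature error entering the consistency}, precisely because the inexact form $\hat{s}$ only ever acts on the jump, which vanishes on $u$.

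Combining this orthogonality with the coercivity of the preceding Proposition gives $\iii\xi\iii^2 \lesssim \widehat{\B}(\xi,\xi) = -\widehat{\B}(\eta,\xi) = -a(\eta,\xi)-b(\eta,\xi)-\hat{s}(\eta,\xi)$, and it remains to bound the three terms by $h^{p+1/2}\|u\|_{p+1}\iii\xi\iii$. The crucial observation, the one that makes the theorem true despite only first-order accurate quadrature, is that $\llb\eta\rrb = \llb u-\Pi u\rrb = -\llb \Pi u\rrb$, so the jump of the interpolation error restricted to each face $e$ is the trace of a polynomial of degree $\le p$. Consequently $|\llb\eta\rrb|^2$ is a polynomial of degree $\le 2p$ on $e$, so the positive-weight quadrature seminorm $|\llb\eta\rrb|_e$ is equivalent, after scaling, to $\|\llb\eta\rrb\|_{L^2(e)}$ (coinciding with it exactly when the rule integrates degree $2p$, as it must to integrate $b$ exactly). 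The non-smoothness of $b_0$ never interferes: using the natural upper bound $b_0 \lesssim |\vel\cdot\bm n|$ (satisfied by the upwind choice $b_0=\tfrac12|\vel\cdot\bm n|$), I pull $b_0$ out pointwise at the nodes, so that $|b_0^{1/2}\llb\eta\rrb|_e \lesssim \|\llb\eta\rrb\|_{L^2(e)}$.

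With this in hand the term-by-term estimates reduce to the exact-integration case of Brezzi et al. By Cauchy--Schwarz in the quadrature seminorm, $\hat{s}(\eta,\xi) \le \big(\sum_e |b_0^{1/2}\llb\eta\rrb|_e^2\big)^{1/2}\iii\xi\iii$, and the first factor is $\lesssim \big(\sum_K\|\eta\|_{L^2(\partial K)}^2\big)^{1/2} \lesssim h^{p+1/2}\|u\|_{p+1}$ by the trace inequality $\|\eta\|_{L^2(\partial K)}^2 \lesssim h^{-1}\|\eta\|_{L^2(K)}^2 + h\|\nabla\eta\|_{L^2(K)}^2$ together with the approximation properties of $\Pi$. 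The face term $b(\eta,\xi)$ is handled by Cauchy--Schwarz on $\Gamma$ with weight $|\vel\cdot\bm n|$: the $\eta$-factor gives the same $h^{p+1/2}\|u\|_{p+1}$, while $b_0 \ge \theta_0|\vel\cdot\bm n|$ dominates the $\xi$-factor by $\iii\xi\iii$. For the volume part of $a(\eta,\xi)$ I would exploit the $L^2$-orthogonality of $\Pi$: replacing $\vel$ by its elementwise mean $\bar\vel$ annihilates $\int_K \eta\,\bar\vel\cdot\nabla\xi$ (since $\bar\vel\cdot\nabla\xi\in P_{p-1}$), while the remainder satisfies $|\int_K \eta(\vel-\bar\vel)\cdot\nabla\xi| \lesssim h\|\eta\|_{L^2(K)}\|\nabla\xi\|_{L^2(K)} \lesssim \|\eta\|_{L^2(K)}\|\xi\|_{L^2(K)}$ by an inverse inequality, giving an $O(h^{p+1})$ bound; the reaction term is likewise $O(h^{p+1})$.

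Collecting the bounds yields $\iii\xi\iii \lesssim h^{p+1/2}\|u\|_{p+1}$, and the triangle inequality $\iii u-u_h\iii \le \iii\eta\iii + \iii\xi\iii$ together with $\iii\eta\iii \lesssim h^{p+1/2}\|u\|_{p+1}$ (the face contribution to $\iii\eta\iii$ being exactly the quantity bounded above) completes the argument. I expect the main obstacle to be the honest verification of the quadrature step for $\hat{s}$: one must argue that the first-order accuracy of the rule is irrelevant because the integrand seen by the error equation, $b_0|\llb\eta\rrb|^2$, factors into a bounded weight $b_0$ times a \emph{polynomial} $|\llb\eta\rrb|^2$, so that only the exact (or norm-equivalent) integration of a polynomial is ever required. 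Care is also needed on curved faces, where the mapping Jacobian must be absorbed into the scaling constants of the seminorm equivalence rather than assumed to be integrated exactly.
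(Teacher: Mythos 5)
Your proposal is correct and follows essentially the same route as the paper: the same $L^2$-projection splitting $u-u_h=\eta+\xi$, the same observation that Galerkin orthogonality survives inexact quadrature because $\hat{s}$ acts only on jumps (which vanish for $u$), the same key point that $\llb\eta\rrb=-\llb\Pi u\rrb$ is a polynomial trace so the quadrature seminorm of the jump is controlled by its $L^2$ norm, and the same term-by-term rates ($\mathcal{O}(h^{p+1})$ for $a$, $\mathcal{O}(h^{p+1/2})$ for $b+\hat{s}$). The one place to be careful is $b(\eta,\xi)$: the Cauchy--Schwarz there should be performed in the discrete quadrature inner product $I_e$ (after noting that $b$ equals its quadrature approximation), as the paper does, rather than on the continuous integral with weight $|\vel\cdot\bm n|$, since $\iii\xi\iii$ controls only the \emph{quadrature} seminorm $|b_0^{1/2}\llb\xi\rrb|_e$ and not the continuous weighted $L^2$ norm of the jump on a reentrant face.
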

\begin{proof}
   \def\P{\Pi}
   Let $\P$ denote the $L^2$ projection onto $V_h$, and define
   \[
      \eta = u - \P u, \qquad \delta = u_h - \P u.
   \]
   Let $e = u - u_h$ denote the discretization error.
   Clearly $e = \eta - \delta$, and so we bound
   $
      \iii e \iii \leq \iii \eta \iii + \iii \delta \iii.
   $
   We first claim that
   \begin{equation} \label{eq:eta-ineq}
      \iii \eta \iii \lesssim h^{p+{1/2}} \| u \|_{p+1}.
   \end{equation}
   Note that $b_0 \leq c_e$ on each edge $e \in \partial \k$ for some constant $c_e$, and so
   \begin{align*}
      | b_0^{1/2} \llb u - \P u \rrb |_e^2
      &\leq c_e | \llb u - \P u \rrb |_e^2
      = c_e | \llb \P u \rrb |_e^2 \\
      &= c_e \| \llb \P u \rrb \|_{0,e}^2
      = c_e \| \llb u - \P u \rrb \|_{0,e}^2.
   \end{align*}
   Inequality \eqref{eq:eta-ineq} then follows by combining the above estimate with a standard result on the accuracy of the $L^2$ projection and a trace inequality.

   It remains to bound $\iii \delta \iii$.
   From the stability estimate, we have
   \begin{equation} \label{eq:delta-stability}
      \widehat{\B}(\delta, \delta) \gtrsim \iii \delta \iii^2.
   \end{equation}
   Furthermore,
   \[
      \widehat{\B}(\delta, \delta)
      = \widehat{\B}(u_h - \P u, u_h - \P u) + \widehat{\B}(u - u_h, u_h - \P u)
      = \widehat{\B}(\eta, \delta),
   \]
   since Galerkin orthogonality still holds in the approximate bilinear form $\widehat{\B}(\dotdot)$.
   An argument from \cite{Brezzi2004} shows that
   \begin{equation}
      \label{eq:a-hp}
      a(\eta, \delta)
      \lesssim h^{p+1} \| \delta \|_0 \| u \|_{p+1}.
   \end{equation}
   Additionally,
   \[
      b(\eta, \delta)
      = \int_\Gamma \vel \{ \eta \} \cdot \llb \delta \rrb \, ds
      = \sum_{e\in\Gamma} I_e [ \vel \{ \eta \}, \llb \delta \rrb].
   \]
   Recalling the definition of $\theta_0$ from \eqref{eq:b0-theta0}, we have
   \[
      \vel \{ \eta \} \cdot \llb \delta \rrb
      \leq \left| \vel \cdot \bm n \{ \eta \} (\delta^- - \delta^+) \right|
      \leq \frac{1}{\theta_0} b_0 \left| \{ \eta \} (\delta^- - \delta^+) \right|,
   \]
   and so,
   \begin{align*}
      I_e [ \vel \{ \eta \}, \llb \delta \rrb]
      &= \sum_{i=1}^{n_q} w_i \vel(\bm x_i) \{ \eta(\bm x_i) \} \cdot \llb \delta(\bm x_i) \rrb \\
      &\leq \frac{1}{\theta_0} \sum_{i=1}^{n_q} w_i b_0(\bm x_i) \left| \{ \eta(\bm x_i) \} (\delta^-(\bm x_i) - \delta^+(\bm x_i)) \right| \\
      &= \frac{1}{\theta_0} I_e [b_0^{1/2} |\{\eta \}|, b_0^{1/2} | \delta^- - \delta^+ | ].
   \end{align*}
   By the Cauchy--Schwarz inequality,
   \[
      \frac{1}{\theta_0} I_e [b_0^{1/2} | \eta |, b_0^{1/2} | \delta^- - \delta^+ | ]
      \leq \frac{1}{\theta_0} \left| b_0^{1/2} \{ \eta \} \right|_e \left| b_0^{1/2} \llb \delta \rrb \right|_e.
   \]
   Furthermore,
   \[
      \hat{s}(\eta, \delta)
      = \sum_{e\in\Gamma} I_e [ b_0^{1/2} \llb \eta \rrb, b_0^{1/2} \llb \delta \rrb ]
      \leq \sum_{e\in\Gamma} \left| b_0^{1/2} \llb \eta \rrb \right|_e \left| b_0^{1/2} \llb \delta \rrb \right|_e,
   \]
   and so,
   \begin{equation} \label{eq:b-plus-s}
      b(\eta, \delta) + \hat{s}(\eta, \delta)
      \leq \sum_{e\in\Gamma} \left(
         \frac{1}{\theta_0} \left| b_0^{1/2} \{ \eta \} \right|_e
         + \left| b_0^{1/2} \llb \eta \rrb \right|_e
      \right) \left| b_0^{1/2} \llb \delta \rrb \right|_e.
   \end{equation}
   Applying a trace inequality,
   \[
      | b_0^{1/2} \{ \eta \} |_e
      \leq c_e \| \{ \eta \} \|_{0,e}
      \lesssim h^{p+1/2} \| u \|_{p+1},
   \]
   and
   \[
      | b_0^{1/2} \llb \eta \rrb |_e
      \leq c_e \| \llb \eta \rrb \|_{0,e}
      \lesssim h^{p+1/2} \| u \|_{p+1}.
   \]
   Therefore, another application of the Cauchy--Schwarz inequality to \eqref{eq:b-plus-s} results in
   \begin{equation} \label{eq:b-plus-s-hp}
      b(\eta, \delta) + \hat{s}(\eta, \delta)
      \lesssim h^{p+1/2} \| u \|_{p+1} \left( \sum_e \left| b_0^{1/2} \llb \delta \rrb \right|_e^2 \right)^{1/2}.
   \end{equation}
   Combining \eqref{eq:a-hp} with \eqref{eq:b-plus-s-hp} and \eqref{eq:delta-stability}, we have
   \[
      \iii \delta \iii^2 \lesssim \widehat{\B}(\eta,\delta) \lesssim h^{p+1/2} \| u \|_{p+1} \iii \delta \iii,
   \]
   and so $\iii u - u_h \iii \lesssim h^{p+1/2} \| u \|_{p+1}$.
\end{proof}

\section{Numerical results}

Ample numerical results in the literature have confirmed the high-order accuracy of the DG method applied to the model first-order problem \cite{Cockburn2001,Hesthaven2008}.
In practice optimal rates of $\mathcal{O}(h^{p+1})$ are typically observed for the $L^2$ error, and the rates of $\mathcal{O}(h^{p+1/2})$ predicted by theory occur much more rarely on specially constructed meshes and velocity fields \cite{Johnson1986,Lesaint1974,Peterson1991}.
In the following examples, we confirm high-order convergence specifically on test cases with reentrant faces, and verify that this property still holds even when the \textit{quadrature error} on these faces is only $\mathcal{O}(h)$.

\subsection{Linear advection--reaction} \label{sec:adv-results}
For a first test case, we consider the linear advection--reaction equation \eqref{eq:adv}.
The velocity field is taken to be $\bm\beta = (-y, x)$ and $c$ is given by $c \equiv 0.1$.
The mesh is an unstructured mesh of the domain $[-1,1]\times[-1,1]$, which is then curved by transforming the nodes according to the mapping $(x,y) \mapsto (x\cos\theta - y\sin\theta, y\cos\theta + x\sin\theta)$, where $\theta = 1.5(x^2-1)(y^2-1)$.
The right-hand side is chosen such that the exact solution is given by $u = \exp(0.1\sin(5.1x - 6.2y) + 0.3\cos(4.3x + 3.4y))$.
The solution and the initial (coarse) mesh are shown in Figure \ref{fig:adv}.
The mesh curvature and non-constant velocity field results in 25 reentrant edges on the coarsest mesh.
The polynomial degree is chosen to be $p=3$, and the mesh is refined uniformly 6 times to empirically study the convergence behavior.
On each mesh, we compute the $L^2$ error $\| u - u_h \|_0$ and the error measured in the DG norm $\iii u - u_h \iii$.
Additionally, we measure the \textit{reentrant quadrature error} $Q$ defined by
\[
   Q = \sum_{e\in\Gamma} Q_e := \sum_{e\in\Gamma} \max_{i,j} \left|
      \int_e b_0 \llb \phi_i \rrb \llb \phi_j \rrb \, d\bm x
      - I_e[b_0 \llb \phi_i \rrb, \llb \phi_j \rrb ] \right|,
\]
where $\phi_i$ denotes the DG basis functions.
By accuracy of the quadrature rule, $Q_e = 0$ for any edge $e$ which is not reentrant.
However, on reentrant edges $b_0$ is not smooth, and we expect $Q$ to scale only as $\mathcal{O}(h)$.
Table \ref{tab:conv} displays the convergence results for this problem.
As predicted by the theory, $Q = \mathcal{O}(h)$ and $\iii u - u_h \iii = \mathcal{O}(h^{p+1/2})$.
As is commonly observed in practice, we obtain optimal-order $L^2$ convergence for this problem, $\| u - u_h \|_0 = \mathcal{O}(h^{p+1})$.
This example illustrates the result of Theorem \ref{thm:conv}, guaranteeing high-order convergence of the DG method even when the numerical integration of the upwind term is only first-order accurate.

\begin{table}[b]
   \centering
   \caption{Convergence results for advection--reaction equation on a mesh with reentrant edges.
   $\mathcal{O}(h^{p+1})$ convergence is observed for the $L^2$-norm error, and $\mathcal{O}(h^{p+1/2})$ convergence is observed for the DG-norm error, while $\mathcal{O}(h)$ convergence is observed for the reentrant quadrature error $Q$.}
   \label{tab:conv}
   % \vspace{\abovecaptionskip}
   \begin{tabular}{r|S[table-format=4.0]cc|lc|lc}
      \toprule
      \# DOFs & {\# reentrant} & $Q$ & Rate & $\|u-u_h\|_0$ & Rate & $\iii u - u_h \iii$ & Rate \\
      \midrule
      1{,}904 & 25 & $2.25\times10^{-3}$ & --- & $8.88\times10^{-3}$ & --- & $4.60\times10^{-2}$ & ---\\
      7{,}616 & 52 & $1.33\times10^{-3}$ & 0.76 & $7.47\times10^{-4}$ & 3.57 & $5.63\times10^{-3}$ & 3.03\\
      30{,}464 & 105 & $6.66\times10^{-4}$ & 1.00 & $5.71\times10^{-5}$ & 3.71 & $5.53\times10^{-4}$ & 3.35\\
      121{,}856 & 211 & $3.21\times10^{-4}$ & 1.05 & $3.36\times10^{-6}$ & 4.09 & $5.20\times10^{-5}$ & 3.41\\
      487{,}424 & 421 & $1.45\times10^{-4}$ & 1.15 & $1.91\times10^{-7}$ & 4.14 & $4.73\times10^{-6}$ & 3.46\\
      1{,}949{,}696 & 845 & $7.42\times10^{-5}$ & 0.96 & $1.08\times10^{-8}$ & 4.15 & $4.25\times10^{-7}$ & 3.48\\
      7{,}798{,}784 & 1693 & $3.67\times10^{-5}$ & 1.02 & $6.37\times10^{-10}$ & 4.08 & $3.79\times10^{-8}$ & 3.49\\
      \bottomrule
   \end{tabular}
\end{table}
   % \vspace{\floatsep}

\begin{figure}
   \newdimen\advheight
   \settoheight{\advheight}{%
      \includegraphics[width=1.7in]{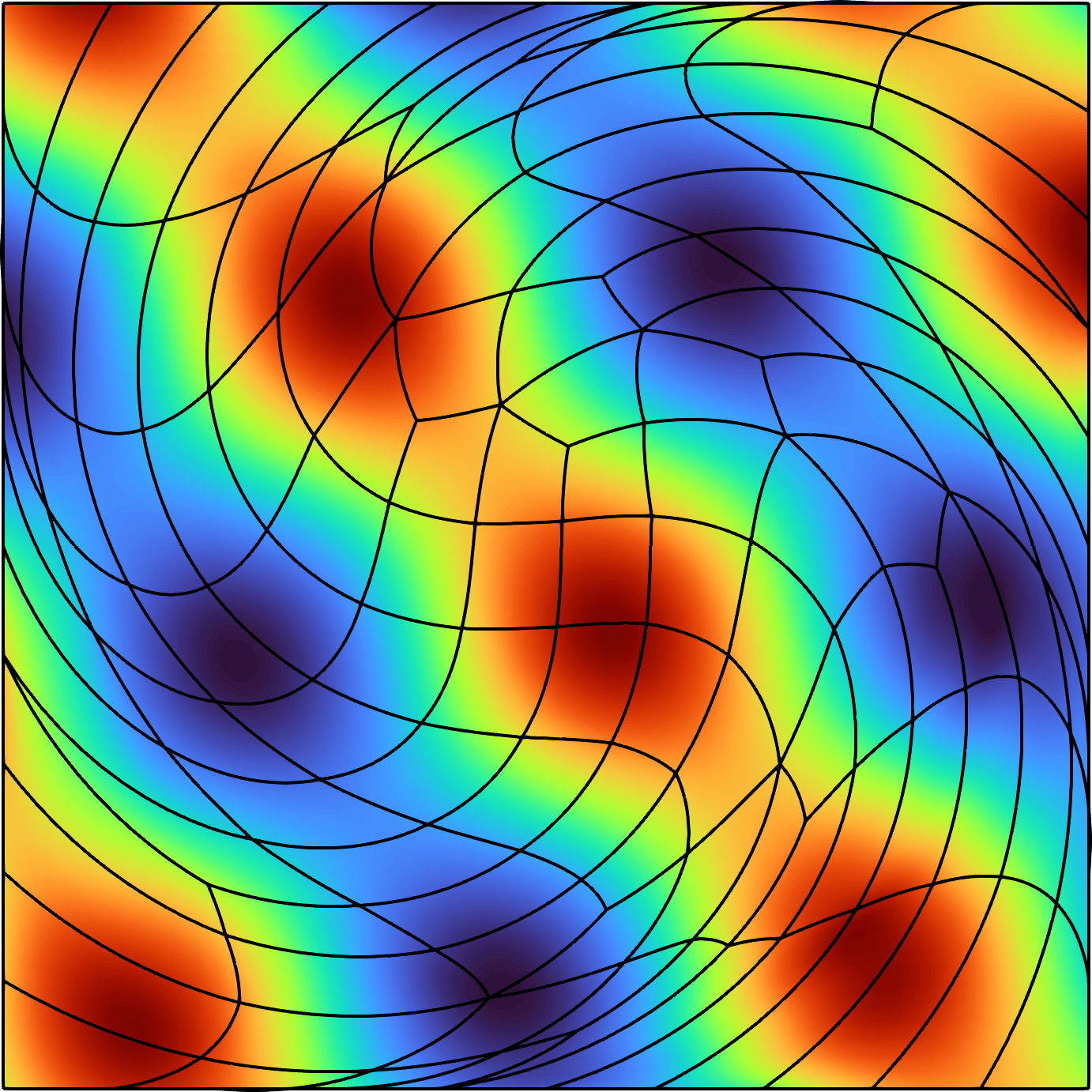}
   }%
   \includegraphics[width=1.7in]{fig/adv0}
   \hspace{2em}
   \includegraphics[width=1.7in]{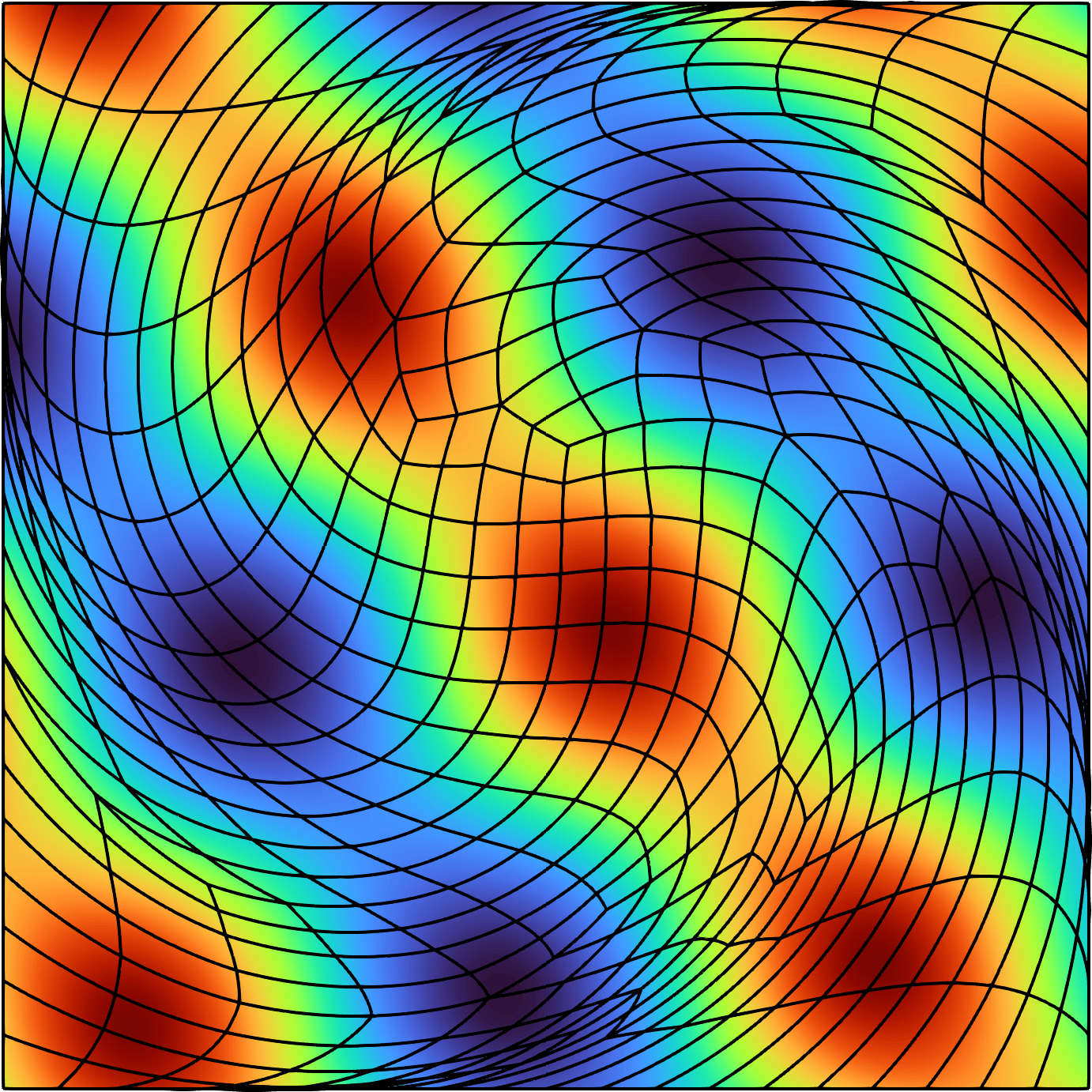}
   \hspace{1em}
   \includegraphics[height=\advheight]{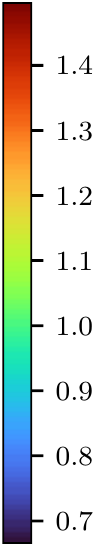}
   % \vspace{\abovecaptionskip}
   \captionof{figure}{Left: initial (coarse) mesh and numerical solution of advection--reaction problem. Right: one uniform refinement.}
   \label{fig:adv}
\end{figure}

\subsection{High-order discrete ordinates transport}
As a more challenging and physically relevant numerical example, we consider the linear, steady-state, monoenergetic Boltzmann equation for the angular flux $\psi_j$ in discrete-ordinate directions $\Omega_j \in \mathbb{S}^2$, $j=1,\ldots,N_\Omega$ (cf.~\cite{Lewis1984}):
\begin{equation}\label{eq:linear-sn}
   \Omega_j \cdot \nabla\psi_j + \sigma_{t}\psi_j
   = \frac{\sigma_s}{4\pi}\varphi + q_j,
\end{equation}
where $\varphi = \sum_{j=1}^{N_\Omega} w_{j}\psi_{j}$ denotes the scalar flux.
The quadrature directions $\Omega_j\in\mathbb{S}^2$ and weights $w_j$ are chosen to exactly integrate all spherical harmonics up to a given degree on $\mathbb{S}^2$.
In the above, $\sigma_t$ is the total opacity, and $\sigma_s$ is the scattering opacity, both of which are non-negative and are determined by the material properties.
Obtaining high-order convergence for this equation is a requisite step for developing high-order accurate numerical methods for the nonlinear thermal radiative transfer equations for the frequency integrated specific intensity.
Equation (\ref{eq:linear-sn}) is discretized using the high-order DG method described in Section \ref{sec:model-problem}; see also \cite{Haut2019} for more details.

We assess the accuracy of the DG method applied to \eqref{eq:linear-sn} using a manufactured solution.
We use polynomial degree $p=3$ on an isoparametric curved mesh obtained from the Lagrangian hydrodynamics simulation of a triple-point shock problem \cite{Dobrev2012}.
The angular discretization uses an $S_{10}$ level-symmetric quadrature set, and results in $N_\Omega = 120$ quadrature directions on the sphere \cite{Lewis1984}.
On curved meshes such as those originating from Lagrangian hydrodynamics, reentrant edges are often impossible to avoid; the coarsest version of the triple-point mesh gives rise to an average of 42 reentrant edges per direction $\Omega_j$.

We define the scattering opacity $\sigma_s(x) = 4/5$ and total opacity $\sigma_t(x) = x^2 + y^2 + 1$.
Then, the source functions $q_j$ and inflow boundary conditions are chosen so that the solution to \eqref{eq:linear-sn} is given by
\begin{equation} \label{eq:psi_j}
   \psi_j(x) =
   \left(\Omega_{j,1}^2 + \Omega_{j,2}\right)
   \left(
      \frac{x^2+y^2+1}{2} + \cos\left( 3 \frac{x+y}{2} \right)
   \right).
\end{equation}
The angular quadrature exactly integrates spherical harmonics $Y_{l,m}$ of degree $l \leq 2$, so the exact scalar flux $\varphi$ is given by
$
   \varphi(x) = \frac{4 \pi}{3}
   \left(
      \frac{x^2+y^2+1}{2} + \cos\left( 3 \frac{x+y}{2} \right)
   \right).
$
The coarsest mesh and scalar flux are shown in Figure \ref{fig:sn}.
In Table \ref{tab:sn} we show the $L^2$ and DG norm error for the scalar flux on a sequence of meshes obtained by uniformly refining the coarsest mesh five times.
Consistent with the results from Theorem \ref{thm:conv} and Section \ref{sec:adv-results}, we observe high-order $\mathcal{O}(h^{p+1/2})$ convergence for this problem in the DG norm.
As in the case of linear advection, we observe $\mathcal{O}(h^{p+1})$ convergence in the $L^2$ norm.

\begin{table}
   \centering
   \begin{minipage}[c]{0.52\linewidth}
      \centering
      \small
      \caption{Convergence of the solution to the neutral particle transport equation on the triple-point mesh.}
      \label{tab:sn}
      \begin{tabular}{r|cc|lc}
         \toprule
         \# DOFs
            & $\| \varphi-\varphi_h \|_{0}$ & Rate
            & \multicolumn{1}{c}{$\iii \varphi-\varphi_h \iii$} & Rate \\
         \midrule
         5{,}376 & $9.40\times10^{-1}$ & --- & $6.07\times10^{0}$ & ---\\
         21{,}504 & $2.19\times10^{-2}$ & 5.42 & $2.03\times10^{-1}$ & 4.90\\
         86{,}016 & $8.62\times10^{-4}$ & 4.67 & $1.04\times10^{-2}$ & 4.29\\
         344{,}064 & $4.29\times10^{-5}$ & 4.33 & $8.41\times10^{-4}$ & 3.63\\
         1{,}376{,}256 & $2.80\times10^{-6}$ & 3.94 & $8.79\times10^{-5}$ & 3.26\\
         5{,}505{,}024 & $1.82\times10^{-7}$ & 3.94 & $8.36\times10^{-6}$ & 3.39\\
         \bottomrule
      \end{tabular}
   \end{minipage}
   \hspace*{\fill}
   \begin{minipage}[c]{0.45\linewidth}
      \centering
      \newdimen\snheight
      \def\snwidth{0.5}
      \settoheight{\snheight}{%
         \includegraphics[width=\snwidth\linewidth]{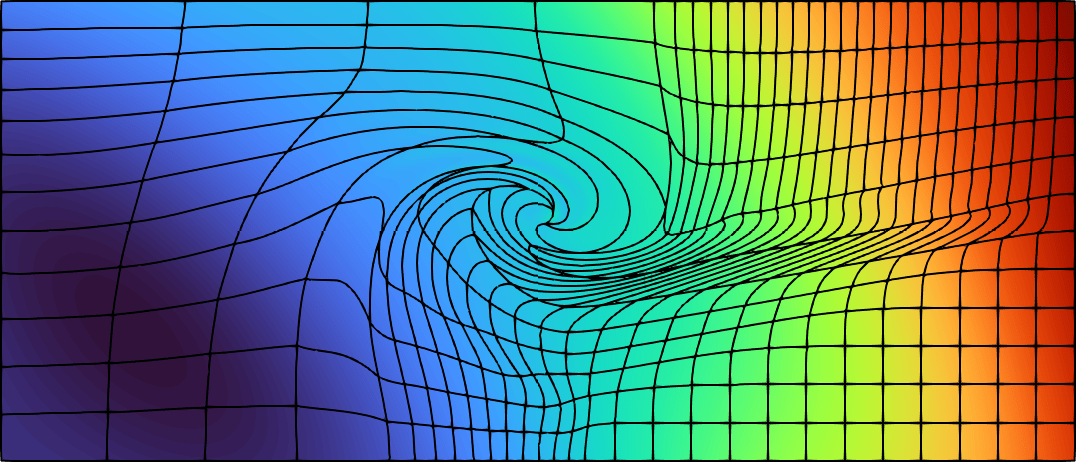}
      }%
      \resizebox{\linewidth}{!}{%
      \includegraphics[width=\snwidth\linewidth]{fig/sn}%
      \hspace{0.1em}
      \includegraphics[height=\snheight]{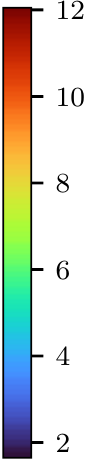}%
      }
      \vspace{0pt}
      \captionof{figure}{Scalar flux $\varphi$ on the triple-point mesh.}
      \label{fig:sn}
   \end{minipage}
\end{table}

\section{Acknowledgements}

{\small This work was performed under the auspices of the U.S.\ Department of Energy by Lawrence Livermore National Laboratory under Contract DE-AC52-07NA27344 (LLNL-JRNL-821642).
This document was prepared as an account of work sponsored by an agency of the United States government.
Neither the United States government nor Lawrence Livermore National Security, LLC, nor any of their employees makes any warranty, expressed or implied, or assumes any legal liability or responsibility for the accuracy, completeness, or usefulness of any information, apparatus, product, or process disclosed, or represents that its use would not infringe privately owned rights.
Reference herein to any specific commercial product, process, or service by trade name, trademark, manufacturer, or otherwise does not necessarily constitute or imply its endorsement, recommendation, or favoring by the United States government or Lawrence Livermore National Security, LLC.
The views and opinions of authors expressed herein do not necessarily state or reflect those of the United States government or Lawrence Livermore National Security, LLC, and shall not be used for advertising or product endorsement purposes.}

\bibliographystyle{siamplain}
\bibliography{refs}

\end{document}